\numberwithin{equation}{section}
\newtheorem{theorem}[equation]{Theorem}
\newtheorem{lemma}[equation]{Lemma}
\newtheorem{proposition}[equation]{Proposition}
\newtheorem{corollary}[equation]{Corollary}
\theoremstyle{definition}
\newtheorem{definition}[equation]{Definition}
\theoremstyle{remark}
\newtheorem{remark}[equation]{Remark}
\def\XXint#1#2#3{{\setbox0=\hbox{$#1{#2#3}{\int}$}
	\vcenter{\hbox{$#2#3$}}\kern-.5\wd0}}
\newcommand{\N}{\mathbb N}
\newcommand{\R}{\mathbb R}
\newcommand{\card}{\operatorname{Card}}
\newcommand{\Span}{\operatorname{span}}
\newcommand\carset{1\negmedspace{\rm l}}
\begin{document}

\title[Besicovitch covering 
property in Carnot groups of step 3 and higher]{Remarks about Besicovitch covering 
property in Carnot groups of step 3 and higher}

\author{Enrico Le Donne}

\address[Le Donne]{Department of Mathematics and Statistics, P.O. Box 35,
FI-40014,
University of Jyv\"askyl\"a, Finland}
\email{ledonne@msri.org}

\author{S\'everine Rigot}
\address[Rigot]{Laboratoire de Math\'ematiques J.A. Dieudonn\'e UMR CNRS 7351,  Universit\'e Nice Sophia Antipolis, 06108 Nice Cedex 02, France}
\email{rigot@unice.fr}

\thanks{The work of S.R. is supported by the ANR-12-BS01-0014-01 Geometrya.}

\renewcommand{\subjclassname}{
28C15, 
49Q15, 
43A80. 
}

\date{\today}

\keywords{Covering theorems, Carnot groups, Homogeneous quasi-distances}

\begin{abstract} 
We prove that the Besicovitch Covering Property (BCP) does not hold for some classes of homogeneous quasi-distances on Carnot groups of step 3 and higher. As a special case we get that, in Carnot groups of step 3 and higher, BCP is not satisfied for those homogeneous distances whose unit ball centered at the origin coincides with a Euclidean ball centered at the origin. This result comes in constrast with the case of the Heisenberg groups where such distances satisfy BCP.
\end{abstract}

\maketitle

 

\section{Introduction} \label{section:intro}

Covering theorems, among which is the Besicovitch Covering Property (BCP), see Definition~\ref{def:bcp} below, are known to be some of the fundamental tools of measure theory. More generally they turn out to be classical tools that play a crucial role in many problems in analysis and geometry. We refer for example to \cite{heinonen} and \cite{mattila} for a more detailed introduction about covering theorems.

In contrast to the Euclidean case, the Heisenberg groups equipped with the commonly used (Cygan-)Kor\'anyi  and Carnot-Carath\'eodory distances  are known not to satisfy BCP (\cite{KoranyiReimann}, \cite{Rigot}, \cite{SawyerWheeden}). However, it has been recently proved that BCP holds in the Heisenberg groups equipped with those homogeneous distances whose unit ball centered at the origin coincides with a Euclidean ball centered at the origin (\cite{ledonne-rigot}, see also Theorem~\ref{thm:bcp-heisenberg} below).

For more general Carnot groups, BCP does not hold for Carnot-Carath\'eodory distances (\cite{Rigot}). Motivated by the question of whether one can find homogeneous (quasi-)distances on a given Carnot group for which BCP holds, we prove in the present paper that BCP does not hold for some classes of homogeneous quasi-distances on Carnot groups of step 3 and higher, see Theorem~\ref{thm:main}. As a particular case, we get that the main result in \cite{ledonne-rigot} do not extend to Carnot groups of step 3 and higher, that is, BCP is not satisfied when these groups are equipped with a homogeneous distance whose unit ball centered at the origin coincides with a Euclidean ball centered at the origin, see Corollary~\ref{cor:main}.

To state our results, we first recall the Besicovitch Covering Property in the general quasi-metric setting. Given a nonempty set $X$, we say that $d:X\times X \rightarrow [0,+\infty[$ is a quasi-distance on $X$ if it is symmetric, $d(p,q) =0$ if and only if $p=q$, and there exists a constant $C\geq 1$ such that $d(p,q) \leq C(d(p,p') + d(p',q))$ for all $p$, $p'$, $q\in X$ (quasi-triangle inequality with multiplicative constant $C$). We call $(X,d)$ a quasi-metric space. When speaking of a ball $B$ in $(X,d)$, it will be understood that $B$ is a set of the form $B=B_{d}(p,r)$ for some $p\in X$ and some $r>0$ where $B_{d}(p,r) := \{q\in X;\; d(q,p)\leq r\}$. Note that when $d$ satisfies the quasi-triangle inequality with a multiplicative constant $C=1$, then $d$ is nothing but a distance on $X$.

\begin{definition}[Besicovitch Covering Property] \label{def:bcp}
Let $(X,d)$ be a  quasi-metric space. We say that $(X,d)$ satisfies the Besicovitch Covering Property (BCP) if there exists a constant $N\in\N$ such that the following holds. Let $A$ be a bounded subset of $X$ and $\mathcal{B}$ be a family of balls such that  each point of $A$ is the center of some ball of $\mathcal{B}$, then there is a finite or countable subfamily $\mathcal{F}\subset \mathcal{B}$ such that the balls in $\mathcal{F}$ cover $A$ and every point in $X$ belongs to at most $N$ balls in $\mathcal{F}$, that is, 
\begin{equation*}
\carset_A \leq \sum_{B \in \mathcal{F}} \carset_B \leq N
\end{equation*}
where $\carset_A$ denotes the characteristic function of the set $A$.
\end{definition}

The Besicovitch Covering Property originates from the work of Besicovitch (\cite{B1}, \cite{B2}). It is satisfied in the Euclidean space and more generally in any finite dimensional normed vector space. 

Next, we recall the definition of Carnot groups and state the conventions and notations we shall use throughout this paper. A Carnot group $G$ of step $s\geq 1$ is a connected and simply connected Lie group whose Lie algebra $\mathfrak{g}$ is endowed with a stratification, $\mathfrak{g}  = V_1 \oplus \cdots  \oplus V_s$ where $[V_1,V_j] = V_{j+1}$ for $1\leq j \leq s-1$ and $[V_1,V_s] = \{0\}$. 

We set $n := \dim \mathfrak{g}$ and consider $(X_1,\cdots,X_n)$ a basis of $\mathfrak{g}$ adapted to the stratification, i.e., $(X_{m_{j-1}+1},\cdots,X_{m_j})$ is a basis of $V_j$ for $1\leq j\leq s$ where $m_0:=0$ and $m_j - m_{j-1}:=\dim V_j$. 

We identify $G$ with $\R^n$ via a choice of exponential coordinates of the first kind. Namely, for Carnot groups, the exponential map $\exp:\mathfrak{g} \rightarrow G$ is a diffeomorphism from $\mathfrak{g}$ to $G$. We then identify $p=\exp(x_1 X_1 + \cdots + x_n X_n)\in G$ with $(x_1,\dots,x_n)$ and, using the Baker-Campbell-Hausdorff formula, the group law is then given by $$(x_1,\dots,x_n) \cdot (x'_1,\dots,x'_n) := (x''_1,\dots,x''_n)$$
where $\exp(x''_1 X_1 + \cdots + x''_n X_n) = \exp(x_1 X_1 + \cdots + x_n X_n) \cdot \exp(x'_1 X_1 + \cdots + x'_n X_n)$.

Dilations $(\delta_\lambda)_{\lambda>0}$ on $G$ are given by $$\delta_\lambda(x_1,\cdots,x_n):=(\lambda^{\alpha_1} x_1, \cdots, \lambda^{\alpha_n} x_n)$$
where $\alpha_i = j$ for $m_{j-1}+1\leq i \leq m_{j}$. These dilations form a one parameter group of group automorphisms.

\begin{definition} \label{def:homogeneous-dist} We say that a quasi-distance $d$ on $G$ is \textit{homogeneous} if $d$ is left-invariant, i.e., $d(p\cdot q,p\cdot q') = d(q,q')$ for all $p$, $q$, $q'\in G$, and one-homogeneous with respect to the dilations, i.e., $d(\delta_\lambda(p),\delta_\lambda(q)) = \lambda \, d(p,q)$ for all $p$, $q\in G$ and all $\lambda>0$.
\end{definition}

Every homogeneous quasi-distance on a Carnot group $G$ induces the topology of the group. Note also that any two homogeneous quasi-distances on a Carnot group $G$ are bi-Lipschitz equivalent. In particular, every homogeneous quasi-distance is bi-Lipschitz equivalent to homogeneous distances.

One can characterize homogeneous quasi-distances by means of their unit ball centered at the origin. Namely, if $d$ is a homogeneous quasi-distance on $G$ and $K:=B_d(0,1)$, then $0$ is in the interior of $K$, $K$ is relatively compact, $K$ is symmetric, i.e., $p\in K$ implies $p^{-1} \in K$, and for all $p\in G$ the set $\{\lambda>0;\; \delta_{1/\lambda}(p) \in K\}$ is a closed sub-interval of $]0,+\infty[$. Conversely, if a subset $K$ of $G$ satisfies these assumptions, then 
\begin{equation} \label{e:def-quasidist}
d(p,q):= \inf\{\lambda >0;\; \delta_{1/\lambda}(p^{-1}\cdot q) \in K\}
\end{equation}
defines a homogeneous quasi-distance on $G$. It is the homogeneous quasi-distance whose unit ball centered at the origin is $K$.~\footnote{It may happen that a homogeneous quasi-distance on a Carnot group is not a continuous function on $G\times G$ with respect to the topological structure of the group. The fact that it induces the topology of the group only means that the unit ball centered at the origin contains the origin in its interior and that it is relatively compact. One can show that the quasi-distance is continuous on $G\times G$ if and only if its unit sphere at the origin is closed.}

In particular, any set $K$ of one of the following forms
\begin{equation} \label{e:def-K-1} 
K:= \{(x_1,\dots,x_n) \in G;\; c_1 |x_1|^{\gamma_1} + \cdots + c_n |x_n|^{\gamma_n} \leq 1\}
\end{equation} 
or
\begin{equation} \label{e:def-K-2} 
K:= \{x \in G;\; c_1 \|\overline x_1\|_{d_1}^{\gamma_1} + \cdots + c_s \|\overline x_s\|_{d_s}^{\gamma_s} \leq 1\}
\end{equation} 
for some $\gamma_i >0$, $c_i>0$, induces a homogeneous quasi-distance via \eqref{e:def-quasidist}. In \eqref{e:def-K-2}, for $x=(x_1,\dots,x_n)$, we have set $\overline x_j := (x_{m_{j-1}+1},\dots,x_{m_j})$, $d_j:=\dim V_j$ and $\|\cdot\|_{d_j}$ denotes the Euclidean norm in $\R^{d_j}$.

\medskip

Our main result is the following:
\begin{theorem} \label{thm:main}
Let $G$ be a Carnot group of step 3 or higher. Let $K$ be a subset of $G$ given by \eqref{e:def-K-1} or \eqref{e:def-K-2} and let $d$ be the homogeneous quasi-distance induced by $K$ via \eqref{e:def-quasidist}. Then BCP does not hold in $(G,d)$.
\end{theorem}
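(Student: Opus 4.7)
To disprove BCP in $(G,d)$ we use the following sufficient condition: for every $N\in\N$, construct points $p_1,\ldots,p_N \in G$ and a radius $r>0$ such that $0 \in B_d(p_k,r)$ for each $k$ and $p_j \notin B_d(p_k,r)$ whenever $j\neq k$. Testing BCP against the bounded set $A := \{p_1,\ldots,p_N\}$ and the covering family $\mathcal{B} := \{B_d(p_k,r)\}_k$, any admissible subcover of $A$ must then contain every ball $B_d(p_k,r)$ (since the center $p_k$ belongs to no other ball of $\mathcal{B}$), producing multiplicity at least $N$ at the common point $0$; as $N$ is arbitrary, BCP cannot hold.

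Two structural inputs make such configurations possible. The first is the \emph{layer-separated} form of $d$: since $K$, given by either \eqref{e:def-K-1} or \eqref{e:def-K-2}, is invariant under each coordinate sign change, the induced quasi-distance satisfies the layerwise comparison
\[
d(0, q) \asymp \max_{1\le i\le n} c_i^{1/\gamma_i}\,|x_i|^{1/\alpha_i}
\]
(and analogously, layer by layer, for \eqref{e:def-K-2}), with constants depending only on the exponents. In particular, no cancellation between layers of different weights is possible, so distances can be estimated layer by layer. The second input is the step-$\ge 3$ structure: as $V_1$ generates $\g$ and $V_3 = [V_1,V_2]\ne \{0\}$, we fix $X,Y \in V_1$ with $Z := [X,Y] \ne 0 \in V_2$ and $W := [X,[X,Y]] \ne 0 \in V_3$. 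The step-$3$ BCH formula truncates at the third-order brackets, so compositions of exponentials of elements of $\Span(X,Y,Z,W)$ can be computed explicitly, and one key non-triviality appears: combining two horizontal translations yields a genuinely nonzero $W$-component of cubic order in the horizontal parameters.

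We build the centers inside this step-$3$ subgroup, using a configuration of $N$ points whose horizontal and $V_2$-coordinates remain uniformly small (keeping $d(0, p_k)$ bounded by $r$) while the $V_3$-coordinates of the pairwise differences $p_k^{-1}\cdot p_j$ are well-spread; by the layer-separated estimate, such a spread forces $d(p_k, p_j)$ to exceed $r$. The main technical obstacle is the explicit parameter choice making this work for arbitrarily large $N$: the construction must trade off the $V_2$-contribution scaling like $(\text{layer-}2)^{1/2}$ (which would otherwise inflate $d(0,p_k)$) against the $V_3$-contribution scaling like $(\text{layer-}3)^{1/3}$ that separates the centers, and it is the extra freedom provided by $V_3$ in step $\ge 3$ that makes arbitrarily large $N$ accessible, as opposed to the Heisenberg case, where the analog of BCP is known to hold for such quasi-distances.
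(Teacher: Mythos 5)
Your overall strategy --- exhibiting, for every $N$, a family of $N$ balls with a common point such that no center lies in another ball of the family, and concluding that BCP fails --- is sound; this is precisely the notion of a family of Besicovitch balls used in the paper (failure of WBCP implies failure of BCP). The genuine gap is in the tool you propose for verifying the required inequalities. A Besicovitch family needs the \emph{exact} inequalities $d(0,p_k)\le r$ and $d(p_k,p_j)>r$ simultaneously, and these cannot be extracted from a two-sided comparison $d(0,q)\asymp\max_i c_i^{1/(\alpha_i\gamma_i)}\,|x_i|^{1/\alpha_i}$ with unspecified constants: BCP is not a bi-Lipschitz invariant (on the Heisenberg group the Carnot--Carath\'eodory distance fails BCP while the bi-Lipschitz equivalent Euclidean-ball distance satisfies it, cf.\ Theorem~\ref{thm:bcp-heisenberg}), and since $(G,d)$ is doubling one cannot place $N$ points in $B_d(0,r)$ that are pairwise $cr$-separated for a fixed $c>0$ and arbitrarily large $N$. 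The separation must therefore come from a calibrated computation with the specific defining function of $K$, which your sketch does not carry out; the parameter choice you flag as ``the main technical obstacle'' is in fact the entire content of the proof. Two smaller issues: $[X,[X,Y]]\neq 0$ does not follow for an arbitrary pair with $[X,Y]\neq 0$ (it requires a further choice, though such a pair always exists in step $\ge 3$), and $\Span(X,Y,[X,Y],[X,[X,Y]])$ need not be a subalgebra, so the ``explicit BCH'' computation is not self-contained as stated.

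For comparison, the paper avoids any direct construction in the full group by a two-stage reduction. First, restricting $d$ to the two-dimensional abelian homogeneous subgroup $\exp(\R X_i\oplus\R X_n)$, with $X_i$ in the first layer and $X_n$ in the last, an exactly calibrated sequence of Besicovitch balls (Lemma~\ref{lemma:R2}, whose key step is the expansion $(1-\varepsilon^a r^{-(n+1)a})^{1/b}-\varepsilon^{s}\varepsilon_k^{-s}(\cdots)^{1/b}=1-b^{-1}\varepsilon^a r^{-(n+1)a}+o(\varepsilon^a)$, valid precisely when $a<s$) shows that BCP forces $\gamma_1,\dots,\gamma_{m_1}\ge s\ge 3$. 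Second, the quotient by $\exp(V_3\oplus\cdots\oplus V_s)$ is a submetry, so WBCP descends to a step-$2$ quotient; restricting to a Heisenberg subgroup yields a homogeneous quasi-distance whose unit sphere is flat to second order at the poles because the horizontal exponents are at least $3$, and \cite[Theorem~6.1]{ledonne-rigot} rules out WBCP there. A direct construction along your lines would in effect have to reprove both of these exact computations; the $\asymp$ estimate cannot substitute for either.
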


Examples of homogeneous distances, i.e., satisfying the quasi-triangle inequality with a multiplicative constant $C=1$, which satisfy the assumption of Theorem~\ref{thm:main} have been given by Hebisch and Sikora. They proved in \cite{Hebisch_Sikora} that for any Carnot group $G$, there exists some $\alpha^*>0$ such that, for all $0<\alpha<\alpha^*$, the Euclidean ball $\{(x_1,\cdots,x_n) \in G;\; |x_1|^{2} + \cdots + |x_n|^{2} \leq \alpha^2\}$ with radius $\alpha$ induces a homogeneous distance on $G$ via \eqref{e:def-quasidist}. For these distances, we have the following corollary. 

\begin{corollary} \label{cor:main}
Let $G$ be a Carnot group of step 3 or higher and let $d$ be a homogeneous distance on $G$ whose unit ball centered at the origin is a Euclidean ball centered at the origin. Then BCP does not hold in $(G,d)$.
\end{corollary}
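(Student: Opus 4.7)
The plan is to establish BCP failure via the following sufficient criterion: it is enough to construct, for every $N \in \N$, points $p_1,\dots,p_N \in G$ and radii $r_1,\dots,r_N>0$ satisfying
\[
d(0,p_i)=r_i \text{ for all } i, \qquad d(p_i,p_j)>r_i \text{ for all } i\neq j.
\]
Indeed, for $A=\{p_1,\dots,p_N\}$ (bounded) and $\mathcal{B}=\{B_d(p_i,r_i)\}_{i=1}^{N}$ (centered at points of $A$), the second condition forces every covering subfamily $\mathcal{F} \subseteq \mathcal{B}$ to contain all $N$ balls (since $p_j \notin B_d(p_i,r_i)$ for $j\neq i$), while the first places $0$ in each, so $\sum_{B \in \mathcal{F}}\carset_B(0)=N$; since $N$ is arbitrary, no uniform BCP constant can exist.

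The configurations exploit the step-$3$ structure. Pick horizontal vectors $X,Y \in V_1$ so that $[X,[X,Y]]$ has nontrivial image in $V_3$ modulo $V_4\oplus\dots\oplus V_s$ (possible when $s\geq 3$), and work in the step-$3$ subquotient they generate. On this subquotient the truncated Baker--Campbell--Hausdorff identity
\[
\log(e^A e^B)=A+B+\tfrac12[A,B]+\tfrac1{12}\bigl([A,[A,B]]-[B,[A,B]]\bigr)
\]
holds exactly. For $p_i=\exp(\xi_i X+\eta_i Y+\cdots)$, direct computation shows that the $V_3$-component of $\log(p_i^{-1}p_j)$ contains a cubic term of the schematic form $(\xi_i+\xi_j)\,(\xi_j\eta_i-\xi_i\eta_j)$, which is \emph{not} a function of the exponential-coordinate differences of $p_i$ and $p_j$ alone. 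The unit balls $K$ of both families \eqref{e:def-K-1} and \eqref{e:def-K-2} are invariant under each coordinate reflection $x_k\mapsto -x_k$, but the group product is not; this mismatch between the symmetries of $K$ and of $\cdot$ --- absent in step $2$, where the above triple brackets vanish --- is the mechanism behind the BCP failure.

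Concretely, for each $N$ I would choose a scale $r>0$, a small $\eps>0$, and points $p_k=\exp(\eps\xi_k X+\eps\eta_k Y+\omega_k W)$ with $W\in V_3$ a nonzero element such as $[X,[X,Y]]$, with horizontal parameters $(\xi_k,\eta_k)$ occupying $N$ well-separated positions on the unit circle of $V_1$ and with $\omega_k$ chosen (by solving the single scalar equation defining $\partial K$) so that $d(0,p_k)=r$ for every $k$. Verifying $d(p_i,p_j)>r$ then amounts to checking $\delta_{1/r}(p_i^{-1}p_j)\notin K$, that is, that the defining sum of $K$ strictly exceeds $1$ after rescaling. The main obstacle is the quantitative calibration of $r$ and $\eps$: the asymmetric cubic step-$3$ contribution coming from BCH must dominate the symmetric lower-order contributions in this excess, uniformly in $i\neq j$ and for arbitrarily large $N$. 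For each of the two families \eqref{e:def-K-1} and \eqref{e:def-K-2}, this reduces to a transversality check --- the outward normal to $\partial K$ at each boundary point realizing $d(0,p_k)=r$ must have nonzero component along the $V_3$-axis into which the cubic twist pushes $p_i^{-1}p_j$ --- which follows from the smooth, coordinate-separated form of the defining functions.
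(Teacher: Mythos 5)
Your reduction of the problem to producing, for every $N$, a family of balls that all contain $0$ while no center lies in another ball is correct, and it is exactly the criterion the paper isolates as WBCP (Definition~\ref{def:wbcp}). But the configuration you propose cannot work, and the mechanism you identify is not the one responsible for the failure. By homogeneity you may normalize $r=1$; then your points $p_k=\exp(\eps\xi_kX+\eps\eta_kY+\omega_kW)$ all lie on the unit sphere, with $\eps/1$ bounded (membership of $\delta_{1}(p_k)$ in $\partial K$ forces the horizontal part to be bounded) and, by the rotational symmetry of the calibration, all $\omega_k$ equal. As $N\to\infty$, angularly adjacent points satisfy $p_j\to p_i$, and since $d$ is a continuous homogeneous distance (this is the Hebisch--Sikora setting), $d(p_i,p_j)\to 0<1=r$. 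Quantitatively, every graded component of $\log(p_i^{-1}p_j)$ carries a factor $O(1/N)$ --- in particular your cubic term $(\xi_i+\xi_j)(\xi_j\eta_i-\xi_i\eta_j)$ contributes only $O(\eps^3/N)$ to the $V_3$-coordinate, hence $O(\eps N^{-1/3})$ to the distance --- so the separation condition $d(p_i,p_j)>r$ fails for adjacent points precisely in the regime you need ($N$ large), and no calibration of $r$ and $\eps$ rescues a single-scale, equally-spread configuration. The BCH twist can only move $p_i^{-1}p_j$ by an amount that vanishes with the angular gap; it cannot create separation.

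The paper's proof of the corollary uses no brackets at all. One restricts $d$ to the \emph{abelian} subgroup $\exp(\R X_i\oplus\R X_n)$ with $X_i\in V_1$ and $X_n\in V_s$; this is $(\R^2,+)$ with anisotropic dilations $\tilde\delta_\lambda(x,y)=(\lambda x,\lambda^s y)$ and unit ball $\{|x|^a+|y|^b\leq1\}$, where $a=b=2$ for the Euclidean ball. Lemma~\ref{lemma:R2} then produces arbitrarily large Besicovitch families with centers $p_n=\bigl(r^{-n},\,\varepsilon_n^{-s}(1-\varepsilon_n^ar^{-na})^{1/b}\bigr)$ and radii $\varepsilon_n^{-1}$: the balls are anchored at $0$ near their ``south poles,'' and because $a<s$ the vertical offsets are of order $o(\varepsilon_{n+1}^a)$ and thus invisible at the scale where the horizontal geometric sequence $r^{-n}$ forces separation. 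The operative dichotomy is therefore the flatness of $\partial K$ at the poles measured against the top dilation exponent $s$ ($2<3\le s$ for a Euclidean ball), not commutativity or reflection symmetry of $K$ versus the group law: the counterexample lives entirely inside a commutative subgroup where the product is symmetric under every coordinate reflection. To repair your argument you would have to abandon the circle and instead take points accumulating along a geometric sequence toward a pole with radii tuned as in Lemma~\ref{lemma:R2}, at which point you have reproduced the paper's construction.
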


As already mentioned, such homogeneous distances were our initial motivation and this corollary comes in constrast with the case of the Heisenberg groups, that are Carnot groups of step 2, due to the following result. 

\begin{theorem}[\cite{ledonne-rigot}] \label{thm:bcp-heisenberg}
Let $\mathbb{H}^n$ be the n-th Heisenberg group and let $d$ be a homogeneous distance on $\mathbb{H}^n$ whose unit ball centered at the origin is a Euclidean ball centered at the origin. Then BCP holds in $(G,d)$.
\end{theorem}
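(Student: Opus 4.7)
The plan is to establish BCP for $(\HH^n, d)$ by reducing to a geometric angular-separation estimate made possible by the Euclidean-ball shape of $K := B_d(0,1)$. The standard Besicovitch reduction says that BCP holds provided there is a uniform bound $N$ on the cardinality of any finite \emph{Besicovitch family}: a finite collection of balls $\{B_d(p_i, r_i)\}_{i=1}^k$ satisfying $p_i \notin B_d(p_j, r_j)$ for all $i \neq j$ and admitting a common point. By left-invariance we may translate the common point to the origin, so each $p_i^{-1}$ lies on $\partial B_d(0, r_i) = \delta_{r_i}(\partial K)$, and we may parametrize by boundary points $q_i \in \partial K$ defined by $p_i^{-1} = \delta_{r_i}(q_i)$.

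The core geometric claim I would prove is: there exists $\eta > 0$, depending only on $K$ and $n$, such that for any two indices $i \neq j$ in a Besicovitch family as above, the Euclidean angle between $q_i$ and $q_j$ in $S^{2n} \subset \R^{2n+1}$ is at least $\eta$. Since $K$ is a Euclidean ball of radius $\al$, each $q_i/\al$ is the outer Euclidean unit normal to $\partial K$ at $q_i$, so these directions live on an ambient round sphere on which $\eta$-separated points admit the classical packing bound; that bound then supplies the BCP constant $N$. In the comparable-scale regime where all $r_i$ are of the same order, the claim follows from uniform bi-Lipschitz comparison of $d$ with the ambient Euclidean metric on an annular shell, reducing to the classical Euclidean Besicovitch angular estimate.

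The main obstacle is the multi-scale case: for $r_i \gg r_j$ the Heisenberg shear $(x \cdot y' - y \cdot x')/2$ in the vertical coordinate of $p_i \cdot \delta_{r_j}(q_j)$ could a priori deform a small ball at $0$ so as to align its normal $q_j$ with that of a much larger ball, shrinking the angular gap. Here the Euclidean-ball hypothesis on $K$, as opposed to a Kor\'anyi level set, must be used quantitatively, through the explicit distance formula
\begin{equation*}
d(0,p)^2 = \frac{|x|^2+|y|^2+\sqrt{(|x|^2+|y|^2)^2+4\al^2 t^2}}{2\al^2}
\end{equation*}
obtained by solving $\delta_{1/\la}(p) \in \partial K$ for $\la$. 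The strict convexity of $\partial K$ in every direction, including the vertical one that Kor\'anyi balls lack, should force any attempt to bring $q_j$ close to $q_i$ to produce a definite displacement in all coordinates that the Heisenberg shear cannot compensate without pushing $p_j$ into $B_d(p_i, r_i)$, in violation of the non-nesting hypothesis. Carrying out this quantitative estimate uniformly across scales, so that a single $\eta$ works for all Besicovitch families regardless of the spread of radii $r_i$, will be the technical heart of the proof.
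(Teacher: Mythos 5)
This theorem is not proved in the present paper: it is imported from \cite{ledonne-rigot}, so there is no in-paper argument to compare yours against; I can only judge the proposal on its own terms, and as it stands it is a programme rather than a proof. The reduction to bounding the cardinality of Besicovitch families is legitimate here (it is the implication WBCP $\Rightarrow$ BCP, which holds because homogeneous distances on Carnot groups are doubling --- a point you should state, since the implication fails in general quasi-metric spaces), and the parametrization $p_i^{-1}=\delta_{r_i}(q_i)$ with $q_i\in\partial K$ together with your explicit formula for $d(0,\cdot)$ are correct. But the entire content of the theorem is concentrated in your ``core geometric claim'' of a uniform Euclidean angular separation $\eta$ between the $q_i$, and that claim is exactly what you do not establish: in the multi-scale regime you yourself single out ($r_i\gg r_j$), you only assert that strict convexity of the Euclidean sphere ``should force'' the required displacement and that carrying out the estimate ``will be the technical heart.'' Nothing in the proposal rules out that the anisotropic normalization $\delta_{1/r_i}$ lets two admissible centers produce nearly parallel normals $q_i,q_j$. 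The classical law-of-cosines argument that yields a scale-independent angular gap in $\R^n$ rests on the identity relating $|p_i-p_j|$ to the triangle at the origin, and it is precisely this Euclidean identity that the Heisenberg product, with its vertical shear term, destroys. So the key estimate is missing, not merely deferred.

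For what it is worth, the proof in \cite{ledonne-rigot} does proceed by bounding the cardinality of families of Besicovitch balls, but not by exhibiting a single angular gap on a round sphere: it is a substantially longer case analysis on the relative sizes of the radii and on the positions of the normalized points (in particular their latitude relative to the poles of $\partial K$), driven by explicit computation with the distance formula you wrote down. The nonvanishing curvature of $\partial K$ at the poles in the vertical direction is indeed the decisive hypothesis --- consistently with the negative result, Theorem 6.1 of \cite{ledonne-rigot}, invoked in Section~\ref{section:proofmain} of the present paper, which shows WBCP fails when that curvature vanishes --- but converting it into a uniform combinatorial bound valid across all scales is the bulk of the work, and your outline leaves essentially all of it to be done.
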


To our knowledge, the case of the Heisenberg groups are the only known examples of non abelian Carnot groups for which one can find some homogeneous distances satisfying BCP, and the only known such distances are those considered in Theorem~\ref{thm:bcp-heisenberg}. This makes the Heisenberg groups very peculiar cases as far as the validity of BCP for some homogeneous distance on a Carnot group is concerned. Theorem~\ref{thm:main} indeed shows that natural analogues of these distances do not satisfy BCP as soon as the step of the group is 3 or higher. There are moreover some hints towards the fact that BCP does not hold for any homogeneous distance as soon as the step of the group is 3 or higher, and even in more general graded groups. This will be studied in a forthcoming paper.

The proof of Theorem~\ref{thm:main} (see Section~\ref{section:proofmain}) is based on two main ingredients. First, we show that for a Carnot group equipped with a homogeneous quasi-distance whose unit ball centered at the origin is given by \eqref{e:def-K-1} (respectively \eqref{e:def-K-2}), the validity of BCP implies that $\gamma_1,\dots,\gamma_{m_1}$ (respectively $\gamma_1$) are bounded below by the step of the group, see Lemma~\ref{lem:power}. Hence, for Carnot groups of step 3 and higher, we get $\gamma_1,\dots,\gamma_{m_1} \geq 3$ (respectively $\gamma_1 \geq 3$) whenever BCP holds. Next, a reduction argument on the step of the group by taking a quotient allows us to reduce the problem to the case of the first Heisenberg group equipped with a homogeneous quasi-distance inherited from the original one. The fact that the quotient map is a submetry plays a key role here. Submetries are indeed particularly well adapted tools in this context. See Section~\ref{sect:submetries} where we prove some of their properties related to the Besicovitch Covering Property. On the other hand, we know by \cite{ledonne-rigot} that, in the Heisenberg groups, BCP does not hold for homogeneous quasi-distances whose unit sphere has vanishing Euclidean curvature at the poles. In particular, BCP cannot hold for the inherited homogeneous quasi-distance when $\gamma_1,\dots,\gamma_{m_1} \geq 3$ (respectively $\gamma_1 \geq 3$). This implies in turn that BCP was not satisfied by the original distance.

\section{Weak Besicovitch Covering Property and Submetries} \label{sect:submetries}

First, we introduce what we call here the Weak Besicovitch Covering Property (the terminology might not be standard). 

\begin{definition}[Family of Besicovitch balls]\label{def:BesicovitchBalls}
Let $(X,d)$ be a  quasi-metric space. We say that a family $\mathcal{B}$ of balls in $(X,d)$ is a {\em  family of Besicovitch balls} if $\mathcal{B} = \{B=B_{d}(x_B,r_B)\}$ is a finite family of balls such that $x_B \not \in B'$ for all $B$, $B'\in \mathcal{B}$, $B\not=B'$, and for which $\bigcap_{B\in \mathcal{B}} B \not= \emptyset$.
\end{definition} 

\begin{definition}[Weak Besicovitch Covering Property] \label{def:wbcp}
We say that a quasi-metric space $(X,d)$ satisfies the Weak Besicovitch Covering Property (WBCP) if there exists a constant $Q\in \N$ such that $\card \mathcal{B} \leq Q$ for every family $\mathcal{B}$ of Besicovitch balls in $(X,d)$.
\end{definition}

If $(X,d)$ satisfies BCP, then $(X,d)$ satisfies WBCP. One can indeed take $Q=N$ where $N$ is given by Definition~\ref{def:bcp}.~\footnote{WBCP is in general strictly weaker than BCP. However, BCP and WBCP are equivalent in doubling metric spaces.} We will prove in Section~\ref{section:proofmain} that WBCP, and hence BCP, does not hold in Carnot groups of step 3 and higher equipped with homogeneous quasi-distances as in Theorem~\ref{thm:main}.

Submetries will play a key role in our arguments. In the rest of this  section, we recall the definition of submetries and prove some of their properties to be used in the proof of Theorem~\ref{thm:main}, see Proposition~\ref{prop:bcp-submetry} and Corollary~\ref{cor:quotient-submetry}.

\begin{definition}[Submetry]
Let $(X,d_X)$ and $(Y,d_Y)$ be quasi-metric spaces. We say that $\pi:X\rightarrow Y$ is a submetry if $\pi$ is a surjective map such that 
\begin{equation} \label{e:def}
\pi(B_{d_X}(p,r)) = B_{d_Y}(\pi(p),r)
\end{equation}  
for all $p\in X$ and all $r>0$.
\end{definition}

\begin{remark} \label{rmk:1lip}
Any submetry $\pi:(X,d_X)\rightarrow (Y,d_Y)$ is 1-Lipschitz. Indeed, given $p, q\in X$, set $r:=d_X(p,q)$. We have $q\in B_{d_X}(p,r)$ hence $\pi(q) \in\pi(B_{d_X}(p,r)) = B_{d_Y}(\pi(p),r)$. Hence $d_Y(\pi(p),\pi(q)) \leq r =  d_X(p,q)$.
\end{remark}

The following characterization of submetries will be technically convenient. For subsets $A, B\subset X$, we consider here the distance $d_X(A,B)$ defined by $d_X(A,B):=\inf (d_X(p,q);\; p\in A, \; q\in B)$.

\begin{proposition} \label{charact-submetry}
Let $(X,d_X)$ and $(Y,d_Y)$ be quasi-metric spaces. Let $\pi:(X,d_X)\rightarrow (Y,d_Y)$ be a surjective map. Then the following are equivalent:

 (i) $\pi$ is a submetry,

 (ii) for all $\hat{p} \in Y$, all $\hat{q} \in Y$ and all $p\in \pi^{-1}(\hat p)$, there exists $q\in \pi^{-1}(\hat q)$ such that 
$$d_X(p,q) = d_Y(\hat{p},\hat{q}) = d_X(\pi^{-1}(\hat p),\pi^{-1}(\hat q))  = d_X(p,\pi^{-1}(\hat q))~.$$
\end{proposition}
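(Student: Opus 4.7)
The plan is to prove the two implications separately, noting that in both cases the key technical fact is that a submetry is 1-Lipschitz (Remark~\ref{rmk:1lip}), so the four quantities in (ii) can be controlled from below uniformly by $d_Y(\hat p,\hat q)$, and the real content is producing a single $q$ that attains this lower bound.

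For (i) $\Rightarrow$ (ii), first set $r := d_Y(\hat p,\hat q)$. Since $\pi(p)=\hat p$ and $\pi$ is a submetry, $\pi(B_{d_X}(p,r)) = B_{d_Y}(\hat p,r)$; the point $\hat q$ belongs to the right-hand side, so there exists $q \in \pi^{-1}(\hat q)\cap B_{d_X}(p,r)$, i.e., $d_X(p,q)\le r$. On the other hand, by Remark~\ref{rmk:1lip}, for any $p' \in \pi^{-1}(\hat p)$ and $q' \in \pi^{-1}(\hat q)$ one has $d_X(p',q')\ge d_Y(\hat p,\hat q)=r$. Taking infima appropriately, this forces $d_X(\pi^{-1}(\hat p), \pi^{-1}(\hat q)) \ge r$ and $d_X(p, \pi^{-1}(\hat q)) \ge r$, and combined with the chain of elementary inequalities
\[
r \;\le\; d_X(\pi^{-1}(\hat p),\pi^{-1}(\hat q)) \;\le\; d_X(p,\pi^{-1}(\hat q)) \;\le\; d_X(p,q) \;\le\; r,
\]
all four quantities equal $r$.

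For (ii) $\Rightarrow$ (i), I would first extract 1-Lipschitzness from (ii): given $p,p' \in X$, apply (ii) with $\hat p = \pi(p)$, $\hat q = \pi(p')$ and the base point $p$; since $p' \in \pi^{-1}(\hat q)$, the equality $d_X(p,\pi^{-1}(\hat q)) = d_Y(\hat p,\hat q)$ yields $d_Y(\pi(p),\pi(p'))\le d_X(p,p')$. This already gives the inclusion $\pi(B_{d_X}(p,r)) \subseteq B_{d_Y}(\pi(p),r)$. For the reverse inclusion, take any $\hat q \in B_{d_Y}(\pi(p),r)$ and apply (ii) with $\hat p := \pi(p)$ to produce $q \in \pi^{-1}(\hat q)$ with $d_X(p,q) = d_Y(\pi(p),\hat q) \le r$; then $\hat q = \pi(q) \in \pi(B_{d_X}(p,r))$, proving \eqref{e:def}.

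No step really stands out as hard; the only thing to be careful about is the order in which the four quantities in (ii) are compared so that the 1-Lipschitz lower bound closes every inequality. I expect the proof to be short, essentially a matching of inclusions and infima, with no use of any structure beyond the axioms of a quasi-metric space and the definition of a submetry.
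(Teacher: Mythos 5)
Your proposal is correct and follows essentially the same route as the paper: for (i)\,$\Rightarrow$\,(ii) the paper likewise sets $r=d_Y(\hat p,\hat q)$, produces $q$ from $\hat q\in B_{d_Y}(\hat p,r)=\pi(B_{d_X}(p,r))$, and closes the chain of inequalities using the 1-Lipschitz bound of Remark~\ref{rmk:1lip}; for (ii)\,$\Rightarrow$\,(i) it proves the same two inclusions using the same equalities from (ii). The only cosmetic difference is that you phrase the first inclusion as ``extracting 1-Lipschitzness from (ii),'' while the paper applies (ii) directly to a point $q\in B_{d_X}(p,r)$ --- the underlying inequality is identical.
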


\begin{proof}
Assume that $\pi$ is a submetry. Let $\hat{p} \in Y$, $\hat{q} \in Y$ and $p\in \pi^{-1}(\hat p)$. Since $\pi$ is 1-Lipschitz (see Remark~\ref{rmk:1lip}), we have $d_Y(\hat p , \hat q) \leq d_X(p',q')$ for  all $p'\in \pi^{-1}(\hat p)$ and all $q'\in \pi^{-1}(\hat q)$. It follows that 
\begin{equation*}
d_Y(\hat p , \hat q) \leq d_X(\pi^{-1}(\hat p), \pi^{-1}(\hat q)) \leq d_X(p,\pi^{-1}(\hat q)).
\end{equation*}
Set $r:=d_Y(\hat{p},\hat{q})$. We have $\hat{q}\in B_{d_Y}(\hat{p},r) = \pi(B_{d_X}(p,r))$ hence one can find  $q\in \pi^{-1}(\hat q) \cap B_{d_X}(p,r)$. Then we have $d_X(p,q) \leq r = d_Y(\hat{p},\hat{q})$. All together, we get that
\begin{equation*} 
d_Y(\hat{p},\hat{q}) \leq d_X(\pi^{-1}(\hat{p}),\pi^{-1}(\hat q)) \leq d_X(p, \pi^{-1}(\hat q))  \leq d_X(p,q) \leq d_Y(\hat{p},\hat{q})
\end{equation*}
from which (ii) follows.

Conversely, assume that (ii) holds. Since $\pi$ is assumed to be  surjective, we only need to prove that \eqref{e:def} holds. Let $p\in X$ and $r>0$. Let us first prove that $\pi(B_{d_X}(p,r)) \subset B_{d_Y}(\pi(p),r)$. Let $\hat{q} \in \pi(B_{d_X}(p,r))$. Then $\hat{q} = \pi(q)$ for some $q\in B_{d_X}(p,r)$ and it follows from (ii) that 
\begin{equation*}
d_Y(\pi(p),\hat{q})  = d_X(p, \pi^{-1}(\hat q) ) \leq d_X(p,q) \leq r\, ,
\end{equation*}
i.e., $\hat{q} \in B_{d_Y}(\pi(p),r)$. Conversely, let $\hat{q} \in B_{d_Y}(\pi(p),r)$. It follows from (ii) that one can find $q\in \pi^{-1}(\hat q)$ such that 
\begin{equation*}
d_X(p,q) = d_Y(\pi(p),\hat{q}) \leq r\,,
\end{equation*}
i.e., $q\in B_{d_X}(p,r)$. Hence $\hat{q}  = \pi(q) \in \pi(B_{d_X}(p,r))$ and this concludes the proof.
\end{proof}

The next proposition shows that submetries preserve the validity of WBCP.

\begin{proposition} \label{prop:bcp-submetry}
Let $(X,d_X)$ and $(Y,d_Y)$ be quasi-metric spaces. Assume that there exists a submetry from $(X,d_X)$ onto $(Y,d_Y)$. If $(X,d_X)$ satisfies WBCP then $(Y,d_Y)$ satisfies WBCP.
\end{proposition}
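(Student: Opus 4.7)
The plan is to lift any Besicovitch family in $Y$ to a Besicovitch family in $X$ of the same cardinality, so the WBCP constant for $X$ transfers directly to $Y$.

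Let $Q$ be the constant provided by WBCP for $(X,d_X)$, and let $\mathcal{B} = \{B_{d_Y}(\hat x_i, r_i)\}_{i=1}^{k}$ be an arbitrary family of Besicovitch balls in $(Y,d_Y)$. By definition, there is some $\hat q_0 \in \bigcap_{i=1}^{k} B_{d_Y}(\hat x_i, r_i)$, and $\hat x_i \notin B_{d_Y}(\hat x_j, r_j)$ whenever $i\neq j$. First I would pick an arbitrary point $q \in \pi^{-1}(\hat q_0)$.

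Next, applying Proposition~\ref{charact-submetry}(ii) with $\hat p := \hat q_0$, $p := q$, and $\hat q := \hat x_i$, for each $i \in \{1,\dots,k\}$ I obtain a point $x_i \in \pi^{-1}(\hat x_i)$ with
\begin{equation*}
d_X(q,x_i) \;=\; d_Y(\hat q_0, \hat x_i) \;\leq\; r_i.
\end{equation*}
By symmetry of $d_X$, this means $q \in B_{d_X}(x_i,r_i)$ for every $i$, so $\bigcap_{i=1}^{k} B_{d_X}(x_i,r_i) \neq \emptyset$.

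It remains to verify the non-inclusion of centers. For $i\neq j$, the 1-Lipschitz property of $\pi$ (Remark~\ref{rmk:1lip}) gives
\begin{equation*}
d_X(x_i,x_j) \;\geq\; d_Y(\pi(x_i),\pi(x_j)) \;=\; d_Y(\hat x_i,\hat x_j) \;>\; r_j,
\end{equation*}
the last inequality coming from the assumption that $\hat x_i \notin B_{d_Y}(\hat x_j, r_j)$. Hence $x_i \notin B_{d_X}(x_j, r_j)$ for all $i\neq j$, and $\{B_{d_X}(x_i, r_i)\}_{i=1}^k$ is a family of Besicovitch balls in $(X,d_X)$. The WBCP assumption on $X$ then forces $k \leq Q$, proving WBCP for $(Y,d_Y)$ with the same constant.

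The only delicate point is the simultaneous lifting: one must first fix a single preimage $q$ of the common intersection point $\hat q_0$, and only then use the characterization to pull back each center $\hat x_i$ to a point $x_i$ whose distance to this fixed $q$ equals $d_Y(\hat q_0, \hat x_i)$. If one instead lifted the $\hat x_i$'s first and tried to find a common intersection point afterwards, no such point need exist; this is why Proposition~\ref{charact-submetry}(ii) (rather than the weaker fact that $\pi$ is 1-Lipschitz with the isometric lifting only on a case-by-case basis) is essential.
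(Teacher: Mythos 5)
Your proof is correct and follows essentially the same route as the paper: fix a lift $q$ of a common point $\hat q_0$, use Proposition~\ref{charact-submetry}(ii) to lift each center isometrically relative to $q$, and use the $1$-Lipschitz property of the submetry to preserve the non-inclusion of centers. The closing remark about the order of the lifting correctly identifies the one subtlety that the paper's argument also relies on.
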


\begin{proof}
Let $\pi:(X,d_X)\rightarrow (Y,d_Y)$ be a submetry. Let $\mathcal{\hat B} = \{B=B_{d_Y}(y_B,r_B)\}$ be a family of Besicovitch balls in $(Y,d_Y)$ (see Definition~\ref{def:BesicovitchBalls}). Let $\hat{p} \in \bigcap_{B\in \mathcal{\hat B}} B$ and fix some $p\in \pi^{-1}(\hat{p})$. Using Proposition~\ref{charact-submetry}, for each $B_{d_Y}(y_B,r_B)\in \mathcal{\hat B}$, one can find $x_B\in \pi^{-1}(y_B)$ such that $d_X(p,x_B) = d_Y(\hat{p},y_B)$. It follows that $d_X(p,x_B) \leq r_B$ and hence $p\in \bigcap_{B\in \mathcal{\hat B}} B_{d_X}(x_B,r_B)$. On the other hand, since $\pi$ is 1-Lipschitz (see Remark~\ref{rmk:1lip}), we have $d_X(x_B,x_{B'}) \geq d_Y(y_B,y_{B'}) > \max(r_B,r_{B'})$ for all $B$, $B'\in \mathcal{\hat B}$, $B\not=B'$.
It follows that $\{B_{d_X}(x_B,r_B);\, B\in \mathcal{\hat B}\}$ is a family of Besicovitch balls in $(X,d_X)$. Since $(X,d_X)$ satisfies WBCP, we have $\card \mathcal{\hat B} \leq Q$ for some $Q\in \N$ (see Definition~\ref{def:wbcp}). Hence $(Y,d_Y)$ satisfies WBCP as well.
\end{proof}

In the next proposition we give a sufficient condition on the fibers of a surjective map that allows us to construct on the target space a quasi-distance for which this map is a submetry. 

\begin{proposition} \label{prop:parallelfibers-submetry}
Let $(X,d_X)$ be a quasi-metric space and $Y$ be a nonempty set. Let $\pi:X\rightarrow Y$ be a surjective map. Assume that the fibers of $\pi$ are \textit{parallel},  i.e., assume that for all $\hat{p} \in Y$, all $\hat{q} \in Y$ and all $p\in \pi^{-1}(\hat p)$, one can find $q\in \pi^{-1}(\hat q)$ such that
\begin{equation} \label{e:parallelfibers}
 d_X(\pi^{-1}( \hat{p}),\pi^{-1}(\hat q)) = d_X(p,q)\, .
\end{equation}
Then 
\begin{equation*} 
d_Y(\hat{p},\hat{q}):= d_X(\pi^{-1}(\hat{p}),\pi^{-1}(\hat q))
\end{equation*}
defines a quasi-distance on $Y$ and $\pi$ is a submetry from $(X,d_X)$ onto $(Y,d_Y)$.
\end{proposition}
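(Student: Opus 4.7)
The plan is to verify the four axioms for $d_Y$ being a quasi-distance and then check condition (ii) of Proposition~\ref{charact-submetry} to conclude that $\pi$ is a submetry. The parallel-fibers assumption is precisely what allows us to realize infima of distances between fibers by actual pairs of points, which drives every step.

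First I would check that $d_Y$ is well-defined and symmetric: both properties follow from the fact that $d_X$ is symmetric, so $d_X(\pi^{-1}(\hat{p}),\pi^{-1}(\hat{q})) = d_X(\pi^{-1}(\hat{q}),\pi^{-1}(\hat{p}))$. For positivity, if $\hat{p} = \hat{q}$ then the two fibers coincide and choosing $p = q$ in that fiber gives $d_Y(\hat{p},\hat{q}) = 0$. Conversely, if $d_Y(\hat{p},\hat{q}) = 0$, then by \eqref{e:parallelfibers} applied to any $p \in \pi^{-1}(\hat{p})$ there is $q\in \pi^{-1}(\hat{q})$ with $d_X(p,q) = 0$, hence $p=q$ and so $\hat{p} = \pi(p) = \pi(q) = \hat{q}$.

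Next I would establish the quasi-triangle inequality for $d_Y$ with the same multiplicative constant $C \geq 1$ as for $d_X$. Given $\hat{p},\hat{p}',\hat{q} \in Y$, I would pick any $p' \in \pi^{-1}(\hat{p}')$, then use \eqref{e:parallelfibers} (and the symmetry noted above) twice, once with the pair $(\hat{p}',\hat{p})$ starting at $p'$ and once with the pair $(\hat{p}',\hat{q})$ starting at $p'$, to produce $p \in \pi^{-1}(\hat{p})$ and $q\in \pi^{-1}(\hat{q})$ with
\begin{equation*}
d_X(p',p) = d_Y(\hat{p}',\hat{p}) \quad \text{and} \quad d_X(p',q) = d_Y(\hat{p}',\hat{q}).
\end{equation*}
Then by the quasi-triangle inequality for $d_X$,
\begin{equation*}
d_Y(\hat{p},\hat{q}) \leq d_X(p,q) \leq C\bigl(d_X(p,p') + d_X(p',q)\bigr) = C\bigl(d_Y(\hat{p},\hat{p}') + d_Y(\hat{p}',\hat{q})\bigr).
\end{equation*}

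Finally I would verify condition (ii) of Proposition~\ref{charact-submetry}. Given $\hat{p},\hat{q}\in Y$ and $p\in \pi^{-1}(\hat{p})$, the parallel-fibers hypothesis provides $q\in \pi^{-1}(\hat{q})$ with $d_X(p,q) = d_X(\pi^{-1}(\hat{p}),\pi^{-1}(\hat{q})) = d_Y(\hat{p},\hat{q})$. For the remaining equality involving $d_X(p,\pi^{-1}(\hat{q}))$, I would sandwich:
\begin{equation*}
d_X(\pi^{-1}(\hat{p}),\pi^{-1}(\hat{q})) \leq d_X(p,\pi^{-1}(\hat{q})) \leq d_X(p,q) = d_X(\pi^{-1}(\hat{p}),\pi^{-1}(\hat{q})),
\end{equation*}
so all four quantities coincide, and $\pi$ is a submetry by Proposition~\ref{charact-submetry}.

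The only step that requires any care is the quasi-triangle inequality, where one has to make sure to apply the parallel-fibers hypothesis starting from the \emph{same} intermediate point $p'$ on both sides so that the $d_X$-triangle inequality can be applied; everything else is a direct unwinding of definitions.
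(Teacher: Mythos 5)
Your proposal is correct and follows essentially the same route as the paper: the degeneracy and symmetry checks, the quasi-triangle inequality obtained by applying the parallel-fibers hypothesis twice from the same intermediate point $p'$, and the appeal to condition (ii) of Proposition~\ref{charact-submetry} all match the paper's argument (you merely spell out the final sandwich for (ii), which the paper leaves implicit).
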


\begin{proof}
First, let us check that $d_Y$ defines a quasi-distance on $Y$. Assume that $d_Y(\hat{p},\hat{q})=0$. Then, by definition of $d_Y$ and using \eqref{e:parallelfibers}, for all $p\in \pi^{-1}(\hat p)$, one can find $q\in \pi^{-1}(\hat q)$ such that $d_X(p,q) = d_Y(\hat{p},\hat{q})=0$. This implies that $p=q$ and hence $\hat p = \hat{q}$. The fact that $d_Y(\hat{p},\hat{q})=d_Y(\hat{q},\hat{p})$ is obvious from the definition of $d_Y$. Next, we check that $d_Y$ satisfies the quasi-triangle inequality with the same multiplicative constant $C$ as $d_X$. Let $\hat{p}$, $\hat{q}$ and $\hat{p}' \in Y$. Let $p'$ be some point in $\pi^{-1}(\hat{p}')$. Using \eqref{e:parallelfibers}, one can find $p\in \pi^{-1}(\hat p)$ such that $d_Y(\hat{p}',\hat{p}) = d_X(p',p)$. Similarly, one can find $q\in \pi^{-1}(\hat q)$ such that $d_Y(\hat{p}',\hat{q}) = d_X(p',q)$. Then we get that
\begin{equation*}
\begin{split}
d_Y(\hat{p},\hat{q}) = d_X(\pi^{-1}(\hat{p}),\pi^{-1}(\hat q))& \leq d_X(p,q)\\ &\leq C(d_X(p,p') +  d_X(p',q)) = C(d_Y(\hat{p},\hat{p}')+ d_Y(\hat{p}',\hat{q}))\, .
\end{split}
\end{equation*}

Finally, the fact that $\pi$ is a submetry from $(X,d_X)$ onto $(Y,d_Y)$ follows from Proposition~\ref{charact-submetry} together with \eqref{e:parallelfibers}.
\end{proof}

We show in the following proposition that Proposition~\ref{prop:parallelfibers-submetry} can be applied to quotient maps from a topological group modulo a boundedly compact normal subgroup.

\begin{proposition} \label{prop:parrallelfibersforquotient}
Let $G$ be a topological group equipped with a left-invariant quasi-distance $d$ which induces the topology of the group. Let $N\lhd G$ be a normal subgroup of $G$. Assume that $N$ is boundedly compact. Then the cosets, i.e., the fibers of the quotient map $\pi: G \rightarrow G/N$, are parallel.
\end{proposition}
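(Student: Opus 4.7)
Fix $\hat p, \hat q \in G/N$ and $p \in \pi^{-1}(\hat p)$, and choose a representative $q_0 \in \pi^{-1}(\hat q)$ so that the two fibers are $pN$ and $q_0 N$. The plan is to produce $q \in q_0 N$ with $d(p, q) = d(pN, q_0 N)$ in two stages: (a) show that $d(p', q_0 N) = d(pN, q_0 N)$ for every $p' \in pN$, so in particular the value $r := d(p, q_0 N)$ coincides with $d(pN, q_0 N)$; and (b) show that this infimum is attained by some $q \in q_0 N$.

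For (a), I would combine left-invariance with normality. For any $n, m \in N$, left-invariance gives $d(pn, q_0 m) = d(e, n^{-1} p^{-1} q_0 m)$, where $e$ denotes the identity. As $m$ ranges over $N$, the element $n^{-1} p^{-1} q_0 m$ ranges over $n^{-1} p^{-1} q_0 N$; since $N$ is normal, $p^{-1} q_0 N = N p^{-1} q_0$, and therefore
\begin{equation*}
n^{-1} p^{-1} q_0 N = n^{-1} N p^{-1} q_0 = N p^{-1} q_0 = p^{-1} q_0 N,
\end{equation*}
independent of $n \in N$. Taking the infimum over $m$ then gives $d(pn, q_0 N) = \inf_{k \in p^{-1} q_0 N} d(e, k)$, which does not depend on $n$; a further infimum over $n$ yields $d(pN, q_0 N) = d(p, q_0 N)$.

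For (b), left-invariance also gives $d(p, q_0 m) = d(q_0^{-1} p, m)$, so $r = \inf_{m \in N} d(q_0^{-1} p, m)$. I would pick a minimizing sequence $(m_k) \subset N$ with $d(q_0^{-1} p, m_k) \to r$. Eventually $(m_k)$ lies in $N \cap \bar B_d(q_0^{-1} p, r+1)$, which the quasi-triangle inequality places inside a closed ball $N \cap \bar B_d(n_0, R)$ centered at a point of $N$; by bounded compactness of $N$, this set is relatively compact in $G$. Extract a convergent subsequence $m_{k_j} \to m^* \in N$ (using that $N$ is closed, being boundedly compact), and take $q := q_0 m^*$ as the candidate element of $q_0 N$.

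The main obstacle will be verifying $d(q_0^{-1} p, m^*) = r$ exactly, rather than the weaker bound $d(q_0^{-1} p, m^*) \leq C r$ afforded by the direct quasi-triangle estimate. Identifying $m^*$ as a genuine minimizer will require invoking that $d$ induces the topology of $G$ (so $d(m_{k_j}, m^*) \to 0$) together with the left-invariance of $d$, in order to extract a lower semi-continuity property of $m \mapsto d(q_0^{-1} p, m)$ at the limit point. Once secured, $q = q_0 m^*$ satisfies $d(p, q) = d(q_0^{-1} p, m^*) = r = d(pN, q_0 N)$, which is exactly condition \eqref{e:parallelfibers} of parallel fibers.
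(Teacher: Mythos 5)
Your plan is essentially the paper's proof: your step (a) is the paper's $\varepsilon$-argument in different clothing (the paper replaces an almost-minimizing pair $(p',q')$ by $(p,\,p\cdot(p')^{-1}\cdot q')$, where $p\cdot(p')^{-1}\cdot q'$ lies in $\pi^{-1}(\hat q)$ by normality and is at distance $d(p',q')$ from $p$ by left-invariance), and your step (b) is the same appeal to bounded compactness of the cosets. The obstacle you flag in (b) is genuine for a bare quasi-distance --- convergence of $m_{k_j}$ to $m^*$ only yields $d(q_0^{-1}p,m^*)\le C\,r$ through the quasi-triangle inequality, and lower semicontinuity of $d(q_0^{-1}p,\cdot)$ does not follow from the stated hypotheses --- but the paper makes exactly the same leap, asserting attainment directly from bounded compactness. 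In the setting where the proposition is actually applied, the balls $B_d(p,r)=p\cdot\delta_r(K)$ are closed subsets of $G$ because the unit ball $K$ of \eqref{e:def-K-1} or \eqref{e:def-K-2} is closed; hence $q\mapsto d(p,q)$ has closed sublevel sets, is lower semicontinuous, and the infimum over the boundedly compact fiber is attained, which closes your (b).
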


\begin{proof}
Let $\hat p$, $\hat q \in G/N$ and $p\in \pi^{-1}(\hat p)$. Since the quasi-distance on $G$ is left-invariant and $N$ is boundedly compact, any coset is boundedly compact as well. It follows that one can find $q\in \pi^{-1}(\hat q)$ such that $d(p,q) = d(p,\pi^{-1}(\hat q))$. For each $\varepsilon>0$, take $p'\in \pi^{-1}(\hat p)$ and $q'\in\pi^{-1}(\hat q)$ such that $d(p',q')\leq d(\pi^{-1}(\hat p),\pi^{-1}(\hat q)) + \varepsilon$. By left-invariance of $d$ and noting that $p\cdot (p')^{-1}\cdot q' \in \pi^{-1}(\hat q)$, we get that 
\begin{multline*}
\varepsilon + d(\pi^{-1}(\hat p),\pi^{-1}(\hat q)) \geq d(p',q') = d(p,p\cdot (p')^{-1}\cdot q')\\
\geq d(p,\pi^{-1}(\hat q)) = d(p,q) \geq d(\pi^{-1}(\hat p),\pi^{-1}(\hat q))\,.
\end{multline*}
Since $\varepsilon$ is arbitrary, it follows that $d(\pi^{-1}(\hat p),\pi^{-1}(\hat q)) = d(p,q)$ and hence the fibers of the quotient map $\pi$ are parallel.
\end{proof}

The next corollary is a straightforward consequence of Proposition~\ref{prop:parallelfibers-submetry} and Proposition~\ref{prop:parrallelfibersforquotient}.

\begin{corollary} \label{cor:quotient-submetry}
Let $G$ be a topological group equipped with a left-invariant quasi-distance $d$ which induces the topology of the group. Let $N\lhd G$ be a normal subgroup of $G$. Assume that $N$ is boundedly compact. Let $\pi: G \rightarrow G/N$ denote the quotient map. Then 
$$d_{G/N}(\hat{p},\hat{q}):= d(\pi^{-1}(\hat{p}),\pi^{-1}(\hat q))$$
defines a quasi-distance on $G/N$ and $\pi$ is a submetry from $(G,d)$ onto $(G/N,d_{G/N})$.
\end{corollary}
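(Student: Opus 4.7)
The plan is essentially a two-step composition, since the corollary is designed to chain the two preceding propositions together; there is no genuine obstacle to overcome.

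First I would invoke Proposition~\ref{prop:parrallelfibersforquotient}. Its hypotheses match exactly: $G$ is a topological group, $d$ is a left-invariant quasi-distance inducing the topology, $N \lhd G$ is a boundedly compact normal subgroup, and $\pi : G \to G/N$ is the quotient map (which is of course surjective). The conclusion is that the fibers of $\pi$ are parallel in the sense of \eqref{e:parallelfibers}, that is, for all $\hat p, \hat q \in G/N$ and all $p \in \pi^{-1}(\hat p)$ one can find $q \in \pi^{-1}(\hat q)$ with $d(p,q) = d(\pi^{-1}(\hat p), \pi^{-1}(\hat q))$.

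Next I would feed this directly into Proposition~\ref{prop:parallelfibers-submetry}, taking $(X,d_X) = (G,d)$ and $Y = G/N$ with the given surjection $\pi$. The parallel-fibers hypothesis is precisely what we just obtained, so the proposition applies and yields two things at once: the formula
\[
d_{G/N}(\hat{p},\hat{q}) := d(\pi^{-1}(\hat p), \pi^{-1}(\hat q))
\]
defines a quasi-distance on $G/N$ (with the same multiplicative constant as $d$), and $\pi$ is a submetry from $(G,d)$ onto $(G/N, d_{G/N})$. This is exactly the statement of the corollary, so assembling the two invocations completes the argument. The only thing worth a line of verification is that surjectivity of the quotient map and the topological/algebraic hypotheses of the corollary line up with the premises of both propositions, which they do by inspection.
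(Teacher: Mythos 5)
Your argument is exactly the paper's: the authors state that the corollary is a straightforward consequence of Proposition~\ref{prop:parallelfibers-submetry} and Proposition~\ref{prop:parrallelfibersforquotient}, which is precisely the two-step composition you carry out. Correct and complete.
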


\section{Proof of Theorem \ref{thm:main}} \label{section:proofmain}

This section is devoted to the proof of Theorem~\ref{thm:main}. We consider  a Carnot group $G$ of step $s$ equipped with a homogeneous quasi-distance $d$ whose unit ball centered at the origin is given by \eqref{e:def-K-1}, i.e., can be described as 
$$B_{d}(0,1)=\{(x_1,\cdots,x_n) \in G;\; c_1 |x_1|^{\gamma_1} + \cdots + c_n |x_n|^{\gamma_n} \leq 1\}$$
for some $\gamma_i >0$, $c_i>0$. The case of a homogeneous quasi-distance whose unit ball centered at the origin is given by \eqref{e:def-K-2} is similar and can be proved with the same arguments.

First, we prove that the validity of BCP implies that $\gamma_1,\dots,\gamma_{m_1}$ are bounded below by the step of the group. Recall that $m_1$ denotes the dimension of the first layer $V_1$ of the stratification of the Lie algebra $\mathfrak{g}$ of $G$.

\begin{lemma} \label{lem:power}
Assume that BCP holds in $(G,d)$. Then $\min (\gamma_1,\dots,\gamma_{m_1}) \geq s$.
\end{lemma}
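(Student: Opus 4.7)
I would argue by contrapositive: assuming $\gamma_{i_0}<s$ for some $i_0\in\{1,\dots,m_1\}$, I will show that $(G,d)$ does not satisfy WBCP, hence does not satisfy BCP by the remark following Definition~\ref{def:wbcp}. By permuting the basis of $V_1$, I may assume $i_0=1$. The goal is then to construct, for every $N\in\N$, a family of $N$ Besicovitch balls in $(G,d)$, contradicting the uniform bound $Q$ of Definition~\ref{def:wbcp}.

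The construction will exploit the full step-$s$ structure of $G$. Using $V_s\neq\{0\}$ together with the bracket-generating hypothesis $[V_1,V_{l-1}]=V_l$, one can pick $Y\in V_1$ such that $W:=\ad_{X_1}^{s-1}(Y)\in V_s$ is nonzero. Via the Baker--Campbell--Hausdorff formula, $W$ couples the $X_1$-direction to the highest layer through polynomial expressions in a free parameter. Concretely, I would consider points of the form
\[ p_k \;=\; \exp(\epsilon_k X_1)\cdot q\cdot \exp(-\epsilon_k X_1), \qquad k=1,\dots,N, \]
for a fixed element $q$ involving $Y$ (for instance $q=\exp(\eta Y)$ for a well-chosen $\eta$), with parameters $\epsilon_k$ to be tuned. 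Using $\Ad(\exp(\epsilon_k X_1))=e^{\epsilon_k\ad_{X_1}}$, one reads off that each $p_k$ has coordinates spread across $V_1,\dots,V_s$, with the $V_l$-component scaling like $\epsilon_k^{l-1}/(l-1)!$ times a commutator of $Y$ with $l-1$ copies of $X_1$. The pairwise products $p_k^{-1}p_j$ then admit a tractable BCH expansion, and $d(p_k,p_j)$ is the maximum (weighted by the $c_i,\gamma_i,\alpha_i$) of the layer contributions; a similar formula controls $d(p_k,0)$.

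The Besicovitch conditions---all balls $B_d(p_k,r_k)$ sharing a common reference point, and $d(p_k,p_j)>\max(r_k,r_j)$ for $k\neq j$---then reduce to comparing two contributions: the $X_1$-direction one (layer weight $1$, exponent $\gamma_1$) against the $V_s$-direction one inherited from $W$ (layer weight $s$, exponent $\gamma_W$ in the corresponding slot). Heuristically, under $\gamma_1<s$ the $V_s$-contribution outpaces the $X_1$-contribution in the relevant scaling regime, which is what should permit arbitrarily many pairwise-separated centers packed inside a single ball around a common point, in violation of WBCP.

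\textbf{Main obstacle.} The central difficulty is the bookkeeping of the BCH expansion through all intermediate layers $V_2,\dots,V_{s-1}$: contributions from these layers must be shown not to spoil the targeted comparison between the $X_1$-scale $\epsilon_k^{1/\gamma_1}$ and the $V_s$-scale $\epsilon_k^{(s-1)/s}$. This requires a careful choice of $Y$ and of $(\epsilon_k,\eta)$, possibly combined with a homogeneous dilation $\delta_\rho$, in order to produce a one-parameter configuration whose limit yields an unbounded packing. The strict inequality $\gamma_1<s$ is precisely what closes the packing estimate, and is the hinge of the whole argument.
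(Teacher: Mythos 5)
Your proposal correctly identifies the target (violate WBCP by producing arbitrarily large families of Besicovitch balls under the assumption $\gamma_{i_0}<s$), but it stops short of a proof and, more importantly, it misses the one idea that makes the argument tractable. The paper does not work in the full group at all: it restricts to the two-dimensional set $N_i=\{(x_1,\dots,x_n)\in G;\ x_k=0 \text{ for } k\neq i,n\}$, which is an \emph{abelian} subgroup of $G$ because $X_i\in V_1$, $X_n\in V_s$ and $[V_1,V_s]=\{0\}$. Since WBCP passes to subsets with the restricted quasi-distance, the whole problem collapses to $(\R^2,+)$ with dilations $(\lambda x,\lambda^s y)$ and unit ball $\{c_i|x|^{\gamma_i}+c_n|y|^{\gamma_n}\leq 1\}$; there the Besicovitch family is built by an explicit induction ($x_n=r^{-n}$, $y_n=\varepsilon_n^{-s}(1-\varepsilon_n^{a}r^{-na})^{1/b}$, all balls $B_\rho(p_n,\varepsilon_n^{-1})$ containing the origin), and the hypothesis $a<s$ enters only to make the cross term $\varepsilon_{n+1}^{s}\varepsilon_k^{-s}$ negligible against $\varepsilon_{n+1}^{a}$ in a Taylor expansion of the separation estimate. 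There is no BCH bookkeeping anywhere. The ``main obstacle'' you flag --- controlling the intermediate layers $V_2,\dots,V_{s-1}$ --- is precisely what one must arrange to avoid, and you have not resolved it; as written the proposal is a plan, not a proof.

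Moreover, the specific configuration you propose appears not to work even heuristically. For $p_k=\exp(\epsilon_k X_1)\cdot\exp(\eta Y)\cdot\exp(-\epsilon_k X_1)=\exp\bigl(\eta\, e^{\epsilon_k \ad_{X_1}}Y\bigr)$, the $V_1$-component of $p_k$ equals $\eta Y$ for every $k$: conjugation does not move the first layer. Hence all the centers coincide in the first-layer coordinates, the differences $p_k^{-1}\cdot p_j$ live in $\exp(V_2\oplus\cdots\oplus V_s)$, and the exponent $\gamma_{i_0}$ of a first-layer coordinate never enters the separation estimates --- yet that exponent is exactly what the lemma is about. You also have only one free parameter $\epsilon_k$ per point, whereas the construction needs two independent scales (the paper uses a geometric sequence $r^{-n}$ for the first-layer coordinate and a separately, inductively chosen, much smaller sequence $\varepsilon_n$ governing the radii and the top-layer coordinate). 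I would redo the argument by first observing that $\exp(\R X_{i_0}\oplus\R X_n)$ is an abelian subgroup to which WBCP restricts, and then carrying out the two-dimensional packing there.
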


\begin{proof}
Let $1\leq i \leq m_1$ be fixed and set $N_i:=\{(x_1,\cdots,x_n) \in G;\; x_k=0 \text{ for } k\not= i,n\}$. Since BCP, and hence WBCP (see Definition~\ref{def:wbcp}), holds in $(G,d)$, WBCP also holds in $(N_i,d_{N_i})$ where $d_{N_i}$ denotes the quasi-distance $d$ restricted to $N_i$.~\footnote{\label{bcp-subset}More generally, if $Y$ is a subset of a quasi-metric space $(X,d_X)$ which satisfies WBCP, then $(Y,d_Y)$ also satisfies WBCP where $d_Y$ denote the quasi-distance $d_X$ restricted to $Y$.}

On the other hand, $N_i$ is an abelian subgroup of $G$ that can be identified with $\R^2$ equipped with the usual addition, denoted by $+$, as a group law and with the family of dilations $\tilde \delta_\lambda(x,y):=(\lambda x, \lambda^s y)$ for $\lambda>0$. With this identification, the quasi-distance $d_{N_i}$ is then a left-invariant and one-homogeneous quasi-distance on $(\R^2,+,(\tilde\delta_\lambda)_{\lambda>0})$ whose unit ball centered at the origin can be described as $\{(x,y) \in \R^2;\; c_i|x|^{\gamma_i}+c_n|y|^{\gamma_n}\leq 1\}$. It then follows from Lemma~\ref{lemma:R2} below that $\gamma_i\geq s$. 
\end{proof}

\begin{lemma} \label{lemma:R2} Let $\R^2$ be equipped with the usual addition, denoted by $+$, as a group law and the family of dilations $\tilde\delta_\lambda(x,y):=(\lambda x, \lambda^s y)$ for some $s>0$. Let $\rho$ be 
a left-invariant and one-homogeneous quasi-distance on $(\R^2,+,(\tilde\delta_\lambda)_{\lambda>0})$. Assume that the unit ball centered at the origin can be described as
\begin{equation*}
B_\rho(0,1) = \{(x,y)\in\R^2;\, \alpha |x|^a+ \beta |y|^b\leq 1\}
\end{equation*}
for some $a>0$, $b>0$, $\alpha>0$, $\beta>0$. If WBCP holds in $(\R^2,\rho)$, then $a\geq s$.
\end{lemma}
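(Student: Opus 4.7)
The plan is to argue by contradiction: assume $a<s$ and that WBCP holds in $(\R^2,\rho)$ with some constant $Q\in\N$, and produce, for arbitrarily large $N$, a family of $N$ Besicovitch balls, contradicting WBCP.

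I would work at geometrically decreasing scales. Fix $\tau\in(0,1)$ (to be chosen small depending on $N$), and for $k=0,1,\dots,N-1$ set $r_k:=\tau^k$ and $p_k:=\tilde\delta_{r_k}(q_k)$, where $q_k=(\epsilon_k,\eta_k)$ is a point of the unit sphere $\Sigma:=\{(x,y):\alpha|x|^a+\beta|y|^b=1\}$, still to be chosen. Since $\rho$ is one-homogeneous under $\tilde\delta_\lambda$, one has $\rho(p_k,0)=r_k$, so the origin lies on the boundary of every $B(p_k,r_k)$ and is a common point of the family. Using left-invariance and one-homogeneity, the Besicovitch requirement $\rho(p_j,p_k)>\max(r_j,r_k)$ for $j<k$ (so $r_j>r_k$) reduces to the inequality
\begin{equation*}
\alpha\,|\epsilon_j-\tau^{k-j}\epsilon_k|^a+\beta\,|\eta_j-\tau^{s(k-j)}\eta_k|^b>1 \quad \text{for every pair } j<k,
\end{equation*}
which we denote by $(\ast)$.

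The place where the hypothesis $a<s$ really bites is when $q_j$ sits at (or close to) the pole $(0,\beta^{-1/b})$ on the $y$-axis, so $\epsilon_j=0$. Then the $x$-contribution in $(\ast)$ is exactly $\alpha\,\tau^{a(k-j)}|\epsilon_k|^a$, while expanding the $y$-term around $\eta_j=\beta^{-1/b}$ gives a first-order $y$-correction of order $\tau^{s(k-j)}\eta_k$. The $x$-term beats the $y$-term as soon as $|\epsilon_k|^a\gtrsim\tau^{(s-a)(k-j)}$; under $a<s$ this threshold is vanishingly small as $\tau\to 0^+$, so any $\epsilon_k\neq 0$ suffices. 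In other words, a small ball whose angular position is off the $y$-axis automatically escapes a big ball centered on the $y$-axis pole, and this is precisely the phenomenon that fails when $a\geq s$.

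The main obstacle is to choose the $q_k$ in a globally consistent way so that $(\ast)$ holds across all $\binom{N}{2}$ pairs simultaneously for arbitrary $N$. I would proceed inductively: starting from a small base family (for instance, the four balls centered at the axis poles $(0,\pm\beta^{-1/b})$ and $(\pm\alpha^{-1/a},0)$ at the four largest scales, which one checks directly form a Besicovitch family of size four), I would add each further ball $B(p_k,r_k)$ with $q_k$ placed in a small admissible region of $\Sigma$ determined by the constraints inherited from all previously-chosen $q_j$. The hypothesis $a<s$ provides enough slack, for $\tau$ small depending on $N$, to keep this admissible region nonempty at each step. The most delicate part, which I expect to be the main technical content of the proof, is the explicit verification of $(\ast)$ across all pairs together with careful bookkeeping of the higher-order perturbation terms during the induction.
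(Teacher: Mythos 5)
Your overall strategy coincides with the paper's: assume $a<s$, build an arbitrarily large family of balls all meeting at the origin (which lies on each boundary by construction), with rescaled centers accumulating at the pole of the unit sphere, and derive the separation condition from the shape of that sphere. Your inequality $(\ast)$ is exactly the condition the paper verifies, and you correctly locate where $a<s$ enters. But the proposal stops precisely where the work begins: the simultaneous verification of $(\ast)$ over all pairs is deferred to an unspecified induction, and the assertion that ``$a<s$ provides enough slack\dots to keep the admissible region nonempty at each step'' is the whole content of the lemma, not a remark. Moreover, the specific scheme you propose --- a \emph{fixed} scale ratio $\tau$, with the angular positions $q_k$ as the inductive parameters --- meets a real obstruction that your pole analysis hides. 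For a pair $j<k$ with $\epsilon_j\neq 0$, the relevant $x$-gain is $|\epsilon_j-\tau^{k-j}\epsilon_k|^a-|\epsilon_j|^a$, and this is \emph{negative} whenever $\tau^{k-j}\epsilon_k$ has the same sign as $\epsilon_j$ and smaller modulus; since for centers near the north pole the $y$-term can only decrease (a deficit), $(\ast)$ then fails outright. Avoiding this for all $\binom{N}{2}$ pairs forces sign and magnitude constraints on the $\epsilon_k$ (roughly $|\tau^{k-j}\epsilon_k|\gg|\epsilon_j|$, i.e.\ geometric growth of $|\epsilon_k|$ at rate $\gg\tau^{-1}$, which is incompatible with $|\epsilon_k|\leq\alpha^{-1/a}$ once $N$ is large), and nothing in your plan addresses this.

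The paper resolves exactly this difficulty by transposing the roles of your two parameters. It \emph{fixes} the angular data as a geometric sequence, $x_n=r^{-n}$ with $r$ chosen so that $(1-r^{-1})^a-r^{-a}>0$ (any $r>2$ works); this makes the $x$-gain $(r^{-k}-r^{-n})^a-r^{-na}$ positive for every pair $k<n$, with the uniform lower bound $r^{-ak}\bigl((1-r^{-1})^a-r^{-a}\bigr)$. It then chooses the \emph{scales} $\varepsilon_n\downarrow 0$ inductively, with separation so rapid that after rescaling by the larger radius every $y$-cross-term $\varepsilon_{n+1}^s\varepsilon_k^{-s}(1-\varepsilon_k^ar^{-ka})^{1/b}$ is $o(\varepsilon_{n+1}^a)$ --- this is the only place $s>a$ is used --- and is therefore negligible against the $x$-gain, which lives at scale $\varepsilon_{n+1}^a$. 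With that division of labour the verification of all pairs reduces to a one-line inequality in $r$, and no delicate bookkeeping of admissible regions is needed. As written, your construction is not complete, and completing it along the lines you sketch would be substantially harder than adopting the paper's parametrization.
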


\begin{proof}
First, we note that we only need to consider the case $\alpha=\beta=1$. Indeed, considering the group automorphism $$f(x,y):= (\alpha^{1/a}\; x, \beta^{1/b}\; y)~,$$
which commutes with the dilations $\tilde\delta_\lambda$, then 
$$\rho'(p,q) := \rho(f^{-1}(p),f^{-1}(q))$$ defines a homogeneous quasi-distance on $(\R^2,+,(\tilde\delta_\lambda)_{\lambda>0})$ and $f:(\R^2,\rho) \rightarrow (\R^2,\rho')$, $f^{-1}: (\R^2,\rho') \rightarrow (\R^2,\rho)$ are submetries. It then follows from Proposition~\ref{prop:bcp-submetry} that WBCP holds in $(\R^2,\rho)$ if and only if WBCP holds in $(\R^2,\rho')$.

Thus let us assume that 
\begin{equation*}
B_\rho(0,1) = \{(x,y)\in\R^2;\, |x|^a+|y|^b\leq 1\}~.
\end{equation*}

Arguing by contradiction, let us assume that $0<a<s$. We will prove that one can find $r>1$ and a positive sequence $(\varepsilon_n)_{n\geq 1}$ decreasing to 0 such that, setting $p_n:=(x_n,y_n)$ where
\begin{equation} \label{e:xnyn}
x_n=r^{-n} \quad \text{and} \quad y_n= \varepsilon_n^{-s} \left(1-\varepsilon_n^a \, r^{-na}\right)^{1/b}~,
\end{equation}
the following hold. First,
\begin{equation} \label{e:intersection}
0 \in \partial B_\rho(p_n,\varepsilon_n^{-1})
\end{equation}
for all $n\geq 1$. Second, 
\begin{equation} \label{e:distcenter}
\rho(p_n,p_k) > \varepsilon_n^{-1}
\end{equation}
for all $n\geq 2$ and all $1\leq k \leq n-1$. Since the sequence $(\varepsilon_n^{-1})_{n\geq 1}$ is increasing, we get from \eqref{e:distcenter} that 
\begin{equation*}
\rho(p_n,p_k) > \max(\varepsilon_n^{-1},\varepsilon_k^{-1})
\end{equation*}
for all $n\not= k$ hence $p_k \not \in B_\rho(p_n,\varepsilon_n^{-1})$ for all  $n\not= k$. Combining this with \eqref{e:intersection}, we get that $\{B_\rho(p_n,\varepsilon_n^{-1});\; n\in J\}$ is a family of Besicovitch balls for any $J\subset \N$ finite which gives a contradiction to the validity of WBCP in $(\R^2,\rho)$.

First, it follows from \eqref{e:xnyn} that 
\begin{equation*}
|\varepsilon_n x_n|^a + |\varepsilon_n^s\, y_n|^b = \varepsilon_n^a\, r^{-na} + (1-\varepsilon_n^a \, r^{-na}) = 1
\end{equation*}
hence $\rho(0, \tilde\delta_{\varepsilon_n}(p_n)) = 1$. By homogeneity it follows that $\rho(0, p_n) = \varepsilon_n^{-1}$ hence \eqref{e:intersection} holds for any fixed $r>1$ and any positive sequence $(\varepsilon_n)_{n\geq 1}$. Hence it remains to find some $r>1$ and some positive sequence $(\varepsilon_n)_{n\geq 1}$ decreasing to 0 such that \eqref{e:distcenter} holds to conclude the proof.

Let $r>1$ to be fixed later and set $\varepsilon_1 =1$. By induction, assume that $\varepsilon_1>\cdots > \varepsilon_n$ have already been choosen. We are looking for $\varepsilon_{n+1} \in (0,\varepsilon_n)$ such that $\rho(p_{n+1},p_k) > \varepsilon_{n+1}^{-1}$, i.e.,
\begin{equation} \label{e:distcenterbis}
\rho(\tilde\delta_{\varepsilon_{n+1}}(p_{n+1}),\tilde\delta_{\varepsilon_{n+1}}(p_k))>1
\end{equation}
for all $1\leq k \leq n$. We have 
\begin{multline*}
|\varepsilon_{n+1}(x_{k} - x_{n+1})|^a + |\varepsilon_{n+1}^s (y_{n+1} - y_k)|^b \\ = \varepsilon_{n+1}^a \, (r^{-k} - r^{-(n+1)})^a + |(1-\varepsilon_{n+1}^a \, r^{-(n+1)a})^{1/b} - \varepsilon_{n+1}^s \varepsilon_{k}^{-s} (1-\varepsilon_k^a \, r^{-ka})^{1/b}|^b~.
\end{multline*}
Since $s>a>0$, we have, for all $k\in \{1,\cdots,n\}$ fixed,
\begin{equation*}
 (1-\varepsilon_{n+1}^a \, r^{-(n+1)a})^{1/b} - \varepsilon_{n+1}^s \varepsilon_{k}^{-s} (1-\varepsilon_k^a \, r^{-ka})^{1/b} = 1-b^{-1}\,\varepsilon_{n+1}^a \, r^{-(n+1)a} + o(\varepsilon_{n+1}^a).
\end{equation*}
It follows that
\begin{equation*}
\begin{split}
|\varepsilon_{n+1}(x_{k} - x_{n+1})|^a &+ |\varepsilon_{n+1}^s (y_{n+1} - y_k)|^b  \\
&= 1 + \varepsilon_{n+1}^a \, ((r^{-k} - r^{-(n+1)})^a - r^{-(n+1)a} ) +  o(\varepsilon_{n+1}^a)\\
&= 1 + \varepsilon_{n+1}^a \, r^{-ak} \, ((1 - r^{-(n+1)+k})^a - r^{(-(n+1)+k)a} ) +  o(\varepsilon_{n+1}^a) \\
&\geq 1 + \varepsilon_{n+1}^a \, r^{-ak} \, ((1 - r^{-1})^a - r^{-a} ) +  o(\varepsilon_{n+1}^a)~.
\end{split}
\end{equation*}
Hence, choosing $r>1$ so that $(1 - r^{-1})^a - r^{-a} >0$, we get that one can choose $\varepsilon_{n+1}$ small enough so that 
\begin{equation*}
|\varepsilon_{n+1}(x_{k} - x_{n+1})|^a + |\varepsilon_{n+1}^s (y_{n+1} - y_k)|^b >1
\end{equation*}
for all $1\leq k \leq n$ which proves \eqref{e:distcenterbis}.
\end{proof}

From now on, we assume that $G$ is a Carnot group of step 3 or higher and we argue by contradiction, assuming that BCP holds in $(G,d)$. Hence, we have from Lemma~\ref{lem:power} that 
\begin{equation} \label{e:power}
\min (\gamma_1,\dots,\gamma_{m_1}) \geq 3~.
\end{equation}

Let us consider $N:=\exp (V_3 \oplus \cdots \oplus V_s)$. Then $N$ is a normal subgroup of $G$. The quotient group $\hat{G} := G/N$ can be identified with $\R^{m_2}$ equipped with the group law 
$$\hat p * \hat p' := \hat \pi ([\hat p,0] \cdot [\hat p',0])$$
where $\hat \pi (x_1,\dots,x_n) := (x_1,\dots,x_{m_2})$ and where, for   $p=(x_1,\dots,x_{n})\in G$, we set $p:=[\hat p,\tilde p]$ with $\hat p := \hat \pi (p)$ and $\tilde p := (x_{m_2+1},\dots,x_n)$.

The group $\hat G$ inherits from $G$ a natural structure of Carnot group of step 2 with dilations given by $$\hat \delta_\lambda (x_1,\dots,x_{m_{2}}) :=(\lambda \, x_1, \cdots, \lambda \, x_{m_1}, \lambda^2\,  x_{m_1+1},\cdots, \lambda^2 \, x_{m_2})~.$$

Since the exponential map is here a global diffeomorphism, $N$ is boundedly compact in $(G,d)$ and it follows from Corollary~\ref{cor:quotient-submetry} that $$d_{\hat G}(\hat p, \hat q) := d(\hat{\pi}^{-1}(\hat p), \hat{\pi}^{-1}(\hat q))$$
defines a quasi-distance on $\hat G$ and $\hat \pi : (G,d) \rightarrow (\hat G,d_{\hat G})$ is a submetry. Hence, by Proposition~\ref{prop:bcp-submetry}, $(\hat G,d_{\hat G})$ satisfies WBCP.

Let us now check that $d_{\hat G}$ is the homogeneous quasi-distance on $\hat G$ whose unit ball centered at the origin is given by
\begin{equation} \label{e:distquotient}
B_{d_{\hat G}}(0,1)=\{(x_1,\dots,x_{m_{2}}) \in \hat G;\; c_1 |x_1|^{\gamma_1} + \cdots + c_{m_2} |x_{m_{2}}|^{\gamma_{m_2}} \leq 1\}~.
\end{equation}

The fact that $d_{\hat G}$ is left-invariant can be easily checked using the left invariance of $d$ and the fact that $\hat \pi$ is a group homomorphism. The homogeneity of $d_{\hat G}$ with respect to dilations $\hat\delta_\lambda$ can be easily checked as well noting that $\hat \pi^{-1}(\hat \delta_\lambda(\hat{p}))=  \delta_\lambda (\hat \pi^{-1}(\hat{p}))$ for all $\hat{p}\in \hat{G}$ and $\lambda>0$ and using the homogeneity of $d$.

Let us now check that \eqref{e:distquotient} holds. For $p=(x_1,\dots,x_n) \in G$, we have
\begin{equation*}
\begin{split}
d(0,p) &= \inf \{r>0;\, \delta_{1/r}(p) \in B_{d}(0,1)\}\\
& = \inf \{r>0;\, f(r^{-\alpha_1}|x_1|,\dots,r^{-\alpha_n}|x_n|)\leq 1\}
\end{split}
\end{equation*}
where $f:(\R^+)^n \rightarrow \R^+$ is given by $f(t_1,\dots,t_n) := c_1 \, t_1^{\gamma_1} + \cdots + c_n \, t_n^{\gamma_n}$. This function being  increasing with respect to the $(n-m_{2})$ last coordinates, we have
\begin{equation*}
d(0,[\hat \pi (p),0]) \leq d(0,p)
\end{equation*}
for all $p\in G$. Together with Proposition~\ref{charact-submetry}, this implies that 
\begin{equation*}
d_{\hat G}(0,\hat{p}) = d(0,\hat\pi^{-1}(\hat{p})) = d(0,[\hat p,0])
\end{equation*}
for all $\hat{p} \in \hat G$. Hence,
 \begin{equation*}
 B_{d_{\hat G}}(0,1) =\{(x_1,\dots,x_{m_2})\in \hat G;\; f(|x_1|,\dots,|x_{m_2}|,0,\dots,0)\leq 1\}
 \end{equation*}
which proves \eqref{e:distquotient}.

Let $(Y_1,\dots,Y_{m_2})$ be the basis of the Lie algebra $\hat{\mathfrak{g}}$ of $\hat G$ inherited from the choosen basis $(X_1,\cdots,X_n)$ adapted to the stratification of $\mathfrak{g}$. Let us fix $i,j\in\{1,\dots,m_1\}$ such that $[Y_i,Y_j]\not=0$. Set $\mathfrak{h}:= \Span(Y_i,Y_j,[Y_i,Y_j])$ and $H:= \exp \mathfrak{h}$. Then $H$ is a subgroup of $\hat G$ that can be identified with the first Heisenberg group. Recall that the first Heisenberg group is the Carnot group of step 2 whose stratification of the Lie algebra is given by $ W_1 \oplus W_2$ where $\dim W_1=2$ and $\dim W_2=1$. Hence we can identify $H$ with $\R^3$ equipped with the Heisenberg group structure given by 
$$(x,y,z)\cdot (x',y',z') := (x+x',y+y',z+z'+\frac{1}{2}(x y' - x' y))~,$$ 
where we identify $\exp(x Y_i + y Y_j + z [Y_i,Y_j])$  with $(x,y,z)$, and equipped with the family of dilations $((x,y,z)\mapsto (\lambda x,\lambda y,\lambda^2 z))_{\lambda>0}$.

The quasi-distance $d_H$ induced by the restriction of $d_{\hat G}$ on $H$ is then a homogeneous quasi-distance whose unit ball centered at the origin is given by
$$B_{d_H}(0,1)=\{(x,y,z) \in H;\; c_i |x|^{\gamma_i} + c_j |y|^{\gamma_j} + c_{m_1+1} | \xi_{m_1+1} \; z|^{\gamma_{m_1+1}} + \cdots + c_{m_2} | \xi_{m_2} \; z|^{\gamma_{m_2}} \leq 1\}$$
where $[Y_i,Y_j] = \xi_{m_1+1} Y_{m_1+1} + \cdots +\xi_{m_2} Y_{m_2}$.

Since WBCP holds in $(\hat G,d_{\hat G})$, WBCP also hold in $(H,d_H)$ (see Footnote~\ref{bcp-subset}). On the other hand, we have $\gamma_i, \gamma_j\geq 3$ by \eqref{e:power}. Near the north pole, i.e., the intersection of $\partial B_{d_H}(0,1)$ with the positive $z$-axis, $B_{d_H}(0,1)$ can thus be described as the subgraph $\{(x,y,z)\in H;\; z\leq \varphi (x,y)\}$ of a $C^2$ function $\varphi$ whose first and second order partial derivatives vanish at the origin. Then it follows from \cite[Theorem 6.1]{ledonne-rigot} that WBCP can not hold in $(H,d_H)$. Note that Theorem 6.1 in \cite{ledonne-rigot} holds not only for homogeneous distances but more generally for homogeneous quasi-distances (with the same proof). This gives a contradiction and concludes the proof of Theorem~\ref{thm:main}.

\medskip

\noindent \textbf{Acknowledgement.} The second-named author is very grateful to the Department of Mathematics and Statistics of the University of Jyv\"askyl\"a, where part of this work was done, for its hospitality.


\begin{thebibliography}{[99]}

\bibitem{B1} A.S.~Besicovitch, \textit{A general form of the covering principle and relative differentiation of additive functions}, Proc. Cambridge Philos. Soc. 41 (1945) 103--110.

\bibitem{B2} A.S.~Besicovitch, \textit{A general form of the covering principle and relative differentiation of additive functions. {II}}, Proc. Cambridge Philos. Soc. 42 (1946) 1--10.

\bibitem{Hebisch_Sikora} W.~Hebisch, A.~Sikora, \textit{A smooth subadditive homogeneous norm on a homogeneous group},  Studia Math. 96 (1990), no. 3, 231–-236.

\bibitem{heinonen} J.~Heinonen, \textit{Lectures on analysis on metric spaces}, Universitext. Springer-Verlag, New York, 2001. x+140 pp. 

\bibitem{KoranyiReimann} A.~Kor{\'a}nyi, H.M.~Reimann, \textit{Foundations for the theory of quasiconformal mappings on the {H}eisenberg group}, Adv. Math. 111 (1995), no. 1, 1--87.

\bibitem{ledonne-rigot} E.~Le Donne, S.~Rigot, \textit{Besicovitch Covering Property for homogeneous distances in the Heisenberg groups}, preprint, http://arxiv.org/abs/1406.1484

\bibitem{mattila} P.~Mattila, \textit{Geometry of sets and measures in Euclidean spaces. Fractals and rectifiability}, Cambridge Studies in Advanced Mathematics, 44. Cambridge University Press, Cambridge, 1995. xii+343 pp. 

\bibitem{Rigot} S.~Rigot, \textit{Counter example to the {B}esicovitch covering property for some {C}arnot groups equipped with their {C}arnot-{C}arath\'eodory metric}, Math. Z. 248 (2004), no. 4, 827--848.

\bibitem{SawyerWheeden} R.L.~Sawyer, R.L.~Wheeden, \textit{Weighted inequalities for fractional integrals on {E}uclidean and homogeneous spaces}, Amer. J. Math. 114 (1992), no. 4, 813--874.

\end{thebibliography}
\end{document}